\def\C{{\mathbb{C}}}
\def\N{{\mathbb{N}}}
\def\P{{\mathbb{P}}}
\def\ppmod{\mkern-16mu \pmod}
\font\chuto=cmbx10 at 16pt  \font\chudams=cmbxsl8
\newtheorem{thm}{Theorem}[section]
\theoremstyle{definition}
\newtheorem{defn}[thm]{Definition}
\newtheorem{rem}[thm]{Remark}
\numberwithin{equation}{section}
\def\ppmod{\mkern-16mu \pmod}
\newcommand*{\Z}{\mathbb{Z}}
\begin{document}

\setcounter{page}{1}

\thispagestyle{plain}
\centerline {~}
\centerline {\bf \chuto Extensions of the class of multiplicative functions}
\medskip

\fancyhead[RE]{\small{\thepage \hfill \textit{P.~Haukkanen}}}  
\fancyhead[LO]{\small{\textit{Extensions of the class of multiplicative functions} \hfill \thepage}}

\renewcommand{\thefootnote}{\fnsymbol{footnote}}

\vskip.8cm
\centerline {Pentti Haukkanen{\footnote{\textit{Corresponding author}}}{\footnote{\textit{The author is supported by The Magnus Ehrnrooth Foundation}}}
}

\renewcommand{\thefootnote}{\arabic{footnote}}

\vskip.5cm
\centerline{School of Information Sciences, 
FI-33014 University of Tampere, Finland
}
\centerline{\texttt{pentti.haukkanen@uta.fi}
}

\vskip .5cm

\begin{abstract}
We consider the classes of quasimultiplicative, semimultiplicative and Selberg multiplicative functions as extensions of the class of multiplicative functions. 
We apply these concepts to Ramanujan's sum and its analogue with respect to regular integers (mod $r$). 
\end{abstract}

\medskip

\noindent
{\bf Mathematics Subject Classification:} 11A25, 11L03 

\smallskip
\noindent
{\bf Keywords:} quasimultiplicative function, semimultiplicative function,  Selberg multiplicative function, Ramanujan's sum, regular integer



\section{Introduction}

An arithmetical function $f\colon\N\to\C$  is said to be
multiplicative if  $f(mn)=f(m)f(n)$
for all $m, n\in\N$ with $(m, n)=1$. 
These functions play a central role in number theory. 
The works of E. T. Bell and R. Vaidyanathaswamy are 
prominent in the history of multiplicative functions, 
see e.g. \cite{B,V}. 

Many of the classical arithmetical functions are multiplicative, e.g. the M\"{o}bius function, Euler's totient function and the divisor functions. 
On the other hand, multiplicative functions have some weak points, 
e.g.,  they are destroyed by compositions such as $cf(n), f(kn), f(k/n), f(n/k), f([k, n])$, where $[k, n]$ is the lcm of $k$ and $n$. 
This has led to certain extensions of the class of multiplicative functions. 
In this paper we introduce quasimultiplicative, semimultiplicative and Selberg multiplicative functions, see \cite{L,Rearick66a,Selberg}. 
As a motivation of these concepts we also consider multiplicative properties of Ramanujan's sum and its analogue with respect to regular integers \cite{HT}. 

There are also important subclasses of the class of multiplicative functions in the number theoretic literature, e.g., the class of rational arithmetical functions, see \cite{LP}. We do not consider these classes in this paper. 

\newpage




\section{Extensions of multiplicative functions} 

\subsection{Extensions of multiplicative functions of one variable}

The usual definition of a multiplicative function is as follows: 

\begin{defn}
An arithmetical function $f\colon\N\to\C$  is 
{\sl multiplicative} if 
\begin{equation}
f(mn)=f(m)f(n)
\end{equation}
for all $m, n\in\N$ with $(m, n)=1$. 
\end{defn}

It is easy to see that a multiplicative function $f$ is totally determined by its values at prime powers. To be more precise, 
an arithmetical function $f$ is multiplicative if  and only if  
\begin{equation}
f(n)=\prod_{p\in\P}
\left(f(p^{\nu_p(n)}\right),  
\end{equation}
where $\nu_p(n)$ is the exponent of $p$ in the canonical factorization of $n$. 
If $f$ is a multiplicative function not identically zero, then  $f(1)=1$.
 
The usual multiplicativity can be easily destroyed, for instance, by multiplying the function values with a constant $(\ne 0, 1)$. 
This leads to the concept of a quasimultiplicative function. 

\begin{defn}
An arithmetical function $f\colon\N\to\C$  is 
{\sl quasimultiplicative} if there exists a nonzero constant $c$ such that  
\begin{equation}
c\,f(mn)=f(m)f(n)
\end{equation}
for all $m, n\in\N$ with $(m, n)=1$. 
\end{defn}

It is easy to see that an arithmetical function $f$ not identically zero  is quasimultiplicative if  and only if  $f(1)\ne 0$ and 
\begin{equation}
f(1)f(mn)=f(m)f(n)
\end{equation}
for all $m, n\in\N$ with $(m, n)=1$. Then $c=f(1)$. 
Quasimultiplicative functions are multiplicative functions 
multiplied by a  constant.  
An arithmetical function $f$ not identically zero is quasimultiplicative if  and only if $f(1)\ne 0$ and $f/f(1)$ is multiplicative. 

D. B. Lahiri \cite{L} introduced the concept of quasimultiplicative functions as a special case of hypomultiplicative functions. 
Lahiri noted that  $r_2(n)$, $r_4(n)$ and $r_8(n)$ are examples of  quasimultiplicative functions, where  $r_s(n)$ is the number of representations of $n$ as the sum of $s$ squares.

The concept of a multiplicative function or a quasimultiplicative function is not satisfactory in the sense that compositions such as $f(kn), f(k/n), f(n/k), f([k, n])$ $(k\in\N\setminus\{1\})$ preserve neither multiplicativity nor quasimultiplicativity. This has led to the concepts of semimultiplicative and Selberg multiplicative functions. 

We first introduce the concept of a semimultiplicative function. This concept is due to David Rearick \cite{Rearick66a} and is also considered, e.g., in the book by R. Sivaramakrishnan \cite{Si}. For further material, see 
\cite{Hau,HT,Rearick66b}. 

\begin{defn} \label{de:semimu}
An arithmetical function $f\colon\N\to\C$ is said to be
{\sl semimultiplicative} 
if there exists a nonzero constant $c$, a positive integer $a$ and
a multiplicative function $f_m$ such that
\begin{equation}\label{e:sem-rea}
f(n)=c f_m(n/a)
\end{equation}
for all $n\in\N$. 
(Here $f_m(x)=0$ if $x$ is not a positive integer.)
\end{defn}

Note that if $f$ is not identically zero, then  $f(a)=c\ne 0$ and $f(n)=0$ for $n<a$. The following theorem follows easily from the definition.

\begin{thm}\label{th:semi1}
An arithmetical function $f$ not identically zero is semimultiplicative if  and only if  
there exists a positive integer $a$ such that 
$f(a)\ne 0$ and 
$f(a x)$ is an arithmetical function (i.e. $f(a x)=0$ if $x$ is not a positive integer)  and 
$$
{f(a n)\over f(a)} 
$$
is multiplicative in $n$. 
\end{thm}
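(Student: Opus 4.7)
My plan is to prove both directions of Theorem~\ref{th:semi1} by directly unwinding Definition~\ref{de:semimu}, reading off the data $(c,a,f_m)$ as explicit expressions in the values of $f$. I interpret the parenthetical clause ``$f(ax)=0$ if $x$ is not a positive integer'' as the statement that $f$ vanishes on all positive integers not divisible by $a$: substituting $n=ax$ turns it into $f(n)=0$ whenever $a\nmid n$. The only other input is the standard fact that a multiplicative function $g$ not identically zero satisfies $g(1)=1$.

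For the forward direction I start from $f(n)=c\,f_m(n/a)$ with $c\ne 0$, $a\in\N$, and $f_m$ multiplicative. Since $f\not\equiv 0$, neither is $f_m$, so $f_m(1)=1$; evaluating at $n=a$ gives $f(a)=c\ne 0$. For any $m\in\N$ one has $f(am)=c\,f_m(m)$, hence $f(am)/f(a)=f_m(m)$ is multiplicative in $m$; and for $n\in\N$ with $a\nmid n$ the convention $f_m(n/a)=0$ forces $f(n)=0$, which is exactly the support condition in the statement.

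For the converse I set $c:=f(a)\ne 0$ and define $f_m(m):=f(am)/f(a)$, which is multiplicative by hypothesis. I then verify $f(n)=c\,f_m(n/a)$ for every $n\in\N$ by splitting on divisibility: when $a\mid n$, writing $n=am$ gives $c\,f_m(n/a)=f(a)\cdot f(am)/f(a)=f(n)$; when $a\nmid n$, the right-hand side vanishes by the $f_m$ convention while the left-hand side vanishes by the support hypothesis, so the two sides agree and $f$ fits Definition~\ref{de:semimu}. There is no serious obstacle in this argument; the only point worth flagging is that the support clause is indispensable in the converse to handle the case $a\nmid n$, and without it the identity could fail even if $f(an)/f(a)$ were multiplicative.
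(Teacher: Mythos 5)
Your proof is correct and is exactly the routine unwinding of Definition~\ref{de:semimu} that the paper has in mind when it says the theorem ``follows easily from the definition'' (the paper gives no explicit proof). Your reading of the clause ``$f(ax)=0$ if $x$ is not a positive integer'' as $f(n)=0$ whenever $a\nmid n$, and your observation that $f_m(1)=1$ gives $f(a)=c$, are precisely the points needed.
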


This can also be written in the following form. 

\begin{thm}\label{th:semi2}
An arithmetical function $f$ not identically zero is semimultiplicative if  and only if  
there exists a positive integer $a$ such that 
$f(a)\ne 0$ and 
$f(n)=0$ whenever $a\nmid n$ 
and
$$
f(a) f(a m n) = f(a m)f(a n) \text{  whenever  }  (m, n)=1. 
$$
\end{thm}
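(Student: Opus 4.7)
The plan is to deduce this characterization directly from the preceding Theorem~\ref{th:semi1}, since both statements are essentially reformulations of Definition~\ref{de:semimu}; the task is to translate the multiplicativity of $f(an)/f(a)$ (in the variable $n$) into the functional equation $f(a)f(amn)=f(am)f(an)$ and to rephrase the support condition.

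First, I would handle the forward direction. Suppose $f$ is semimultiplicative and not identically zero. By Theorem~\ref{th:semi1}, there is a positive integer $a$ with $f(a)\ne 0$, and $g(n):=f(an)/f(a)$ is multiplicative in $n$, with the convention $f(ax)=0$ when $x$ is not a positive integer. The last convention is exactly the statement that $f(n)=0$ whenever $a\nmid n$: given any $n\in\N$, write $x=n/a$; then $x$ is a positive integer iff $a\mid n$, so $f(n)=f(ax)=0$ when $a\nmid n$. Finally, if $(m,n)=1$, multiplicativity of $g$ gives $g(mn)=g(m)g(n)$, i.e.\ $f(amn)/f(a)=\bigl(f(am)/f(a)\bigr)\bigl(f(an)/f(a)\bigr)$, which rearranges to $f(a)f(amn)=f(am)f(an)$.

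For the converse, suppose $a\in\N$ satisfies $f(a)\ne 0$, $f(n)=0$ whenever $a\nmid n$, and $f(a)f(amn)=f(am)f(an)$ whenever $(m,n)=1$. Define $g\colon\N\to\C$ by $g(n)=f(an)/f(a)$. Then $g(1)=1$ and the hypothesis yields $g(mn)=g(m)g(n)$ for coprime $m,n$, so $g$ is multiplicative. Moreover, the support condition ensures that $f(ax)=0$ whenever $x$ is not a positive integer (interpreting $x=y/a$ with $a\nmid y$). Hence the hypotheses of Theorem~\ref{th:semi1} are met, and $f$ is semimultiplicative.

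The argument is almost purely notational; the only place that requires a moment of care is the bookkeeping around the convention ``$f_m(x)=0$ if $x$ is not a positive integer,'' where one must check that ``$f(ax)=0$ for non-integer $x$'' and ``$f(n)=0$ whenever $a\nmid n$'' really say the same thing for arithmetical $f$. Once that equivalence is noted, both implications reduce to rewriting the multiplicativity of $f(an)/f(a)$ as the bilinear identity $f(a)f(amn)=f(am)f(an)$.
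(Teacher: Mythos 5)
Your proof is correct and takes essentially the same route the paper intends: Theorem~\ref{th:semi2} is presented there as a direct rewriting of Theorem~\ref{th:semi1} (no separate proof is given), and your translation of the multiplicativity of $f(an)/f(a)$ into the identity $f(a)f(amn)=f(am)f(an)$ is exactly that rewriting. The one point needing care --- that the convention ``$f(ax)=0$ when $x$ is not a positive integer'' is equivalent to ``$f(n)=0$ whenever $a\nmid n$'' --- you handle correctly.
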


A further characterization is as follows.  This nice identity was proved by Rearick \cite{Rearick66a}. 

\begin{thm}
An arithmetical function $f$ is semimultiplicative  if and only if
\begin{equation}
f(m)f(n)=f((m, n))f([m, n])
\end{equation}
for all $m, n\in\N$.  
\end{thm}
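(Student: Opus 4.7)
The plan is to prove both implications by reducing to the characterization in Theorem~\ref{th:semi2}.

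For the forward direction I would start from the representation $f(n)=c f_m(n/a)$ supplied by Definition~\ref{de:semimu} and split on whether $a$ divides both $m$ and $n$. If $a\nmid m$, then $f(m)=0$; since $(m,n)\mid m$ it follows that $a\nmid (m,n)$, so $f((m,n))=0$ and both sides of the identity vanish. If $a\mid m$ and $a\mid n$, write $m=am'$, $n=an'$; then $(am',an')=a(m',n')$ and $[am',an']=a[m',n']$, so after cancelling the common factor $c^2$ the target identity becomes
\[
f_m(m')f_m(n')=f_m((m',n'))\,f_m([m',n']),
\]
a standard identity for multiplicative functions, verified by checking prime powers and using $\{\min,\max\}=\{\alpha_p,\beta_p\}$.

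For the converse, assume $f$ is not identically zero (the zero function satisfies the identity trivially and can be absorbed by convention) and let $a$ be the least positive integer with $f(a)\neq 0$. I would verify the two conditions of Theorem~\ref{th:semi2}. First, if $a\nmid n$, set $d=(a,n)$; then $d\mid a$ and $d\neq a$, so $d<a$, and minimality forces $f(d)=0$. Applying the hypothesis to $a,n$ gives $f(a)f(n)=f(d)f([a,n])=0$, so $f(n)=0$. Second, given $(m,n)=1$, compute $(am,an)=a$ and $[am,an]=amn$; applying the hypothesis to $am,an$ yields $f(am)f(an)=f(a)f(amn)$, which is exactly the shifted multiplicativity required.

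The main obstacle is the choice of $a$ in the converse: taking $a$ minimal is what simultaneously delivers the support condition (via $(a,n)$ collapsing strictly below $a$ when $a\nmid n$) and the shifted multiplicativity (via $(am,an)=a$ when $(m,n)=1$). Once that choice is made, both verifications amount to direct substitutions into the hypothesis, and Theorem~\ref{th:semi2} closes the argument.
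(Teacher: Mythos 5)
Your argument is correct. Note that the paper itself gives no proof of this identity---it simply attributes it to Rearick \cite{Rearick66a}---so your write-up supplies a complete argument where the paper offers only a citation. The forward direction is sound: when $a\nmid m$ (or symmetrically $a\nmid n$, a case you leave implicit but which is identical) both sides vanish because $(m,n)$ inherits the failure of divisibility by $a$, and when $a$ divides both, the factor $c^2$ cancels and the gcd--lcm identity for the multiplicative function $f_m$ follows from the prime-power factorization together with $\{\min(\alpha_p,\beta_p),\max(\alpha_p,\beta_p)\}=\{\alpha_p,\beta_p\}$ (with the degenerate case $f_m\equiv 0$ trivial). The converse is the more substantive half, and your key choice---taking $a$ to be the least integer with $f(a)\neq 0$---does exactly the work needed: applying the hypothesis to the pair $(a,n)$ with $a\nmid n$ forces $f(n)=0$ via minimality at $d=(a,n)<a$, and applying it to $(am,an)$ with $(m,n)=1$ gives $f(am)f(an)=f(a)f(amn)$, which together with $f(a)\neq 0$ is precisely the characterization in Theorem~\ref{th:semi2} (a result the paper states without proof, so your argument is only as self-contained as that theorem; its proof is routine, but worth acknowledging). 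Your handling of the identically zero function is consistent with the paper's convention, under which the zero function is multiplicative and hence semimultiplicative, so the identity and the definition agree there as well.
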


Semimultiplicative  functions $f$ with $a=1$ and $c=1$ (i.e. $f(1)=1$) are multiplicative functions and 
semimultiplicative  functions $f$ with $a=1$ (i.e. $f(1)\ne 0$) are quasimultiplicative functions.

We now go to the concept of Selberg multiplicative functions. 
The term ``Selberg multiplicative"  was given in \cite{BHS,HHS} 
in the honor of Selberg who introduced these functions in 
\cite{Selberg}. 
Selberg \cite{Selberg} said on the concept of the usual multiplicative functions that  ``{\sl I have never been very satisfied with this definition and would prefer to define a multiplicative function as follows}".   

\begin{defn}\label{de:Sel}
An arithmetical function $f\colon\N\to\C$ is  {\sl Selberg multiplicative} if 
for each prime $p$ there exists $F_p\colon\N_0\to\C$ 
with $F_p(0)=1$ for
all but finitely many primes $p$ such that
\begin{equation}\label{e:Sel}
f(n)=\prod_{p\in\P} F_p(\nu_p(n))
\end{equation}
for all $n\in\N$, where $\nu_p(n)$ is the exponent of $p$ in the canonical factorization of $n$. 
\end{defn}

Multiplicative functions $f$ are Selberg multiplicative with 
\begin{equation}
F_p(\nu_p(n))=f(p^{\nu_p(n)}).   
\end{equation}
Quasimultiplicative functions $f$ are Selberg multiplicative with 
Selberg factorization 
\begin{equation}
f(n)=f(1)\prod_{p\in\P}
\left(\frac{f(p^{\nu_p(n)})}{f(1)}\right)
\end{equation}
provided that $f(1)\ne 0$. 

The following theorem was given in \cite{Hau}. 

\begin{thm}
An arithmetical function is Selberg multiplicative if and only
if it is semimultiplicative. In fact, a semimultiplicative function $f$ possesses a Selberg factorization as
\begin{equation}
f(n)=f(a)\prod_{p\in\P}
\left(\frac{f(ap^{\nu_p(n)-\nu_p(a)})}{f(a)}\right).
\end{equation}
\end{thm}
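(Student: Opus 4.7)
My plan is to prove the equivalence in two halves, using the Rearick identity $f(m)f(n)=f((m,n))f([m,n])$ (proved earlier in the excerpt) as the definition of semimultiplicativity for one direction, and the Definition~\ref{de:semimu} as the starting point for the other. The Selberg factorization will then be recovered by an explicit calculation.

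For the easy direction, I would assume $f$ is Selberg multiplicative and verify the Rearick identity. The key observation is that for every prime $p$ the multiset $\{\nu_p(m),\nu_p(n)\}$ equals the multiset $\{\min(\nu_p(m),\nu_p(n)),\max(\nu_p(m),\nu_p(n))\}=\{\nu_p((m,n)),\nu_p([m,n])\}$. Hence for each prime $p$,
\begin{equation*}
F_p(\nu_p(m))\,F_p(\nu_p(n))=F_p(\nu_p((m,n)))\,F_p(\nu_p([m,n])).
\end{equation*}
Since $F_p(0)=1$ for all but finitely many $p$, the products are finite and well-defined, and multiplying over all $p$ yields $f(m)f(n)=f((m,n))f([m,n])$. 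By the Rearick theorem, $f$ is semimultiplicative.

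For the converse, I would start from Definition~\ref{de:semimu}: write $f(n)=c\,f_m(n/a)$ with $f_m$ multiplicative, $c\neq 0$, and $a\in\N$. Since $f$ is not identically zero, $f_m\not\equiv 0$, so $f_m(1)=1$ and $f(a)=c$. I would then verify the stated factorization by cases. If $a\nmid n$, pick a prime $p^\ast$ with $\nu_{p^\ast}(n)<\nu_{p^\ast}(a)$; then $ap^{\ast\nu_{p^\ast}(n)-\nu_{p^\ast}(a)}=a/p^{\ast\nu_{p^\ast}(a)-\nu_{p^\ast}(n)}$ is a positive integer that $a$ does not divide, so that factor vanishes and both sides are $0$. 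If $a\mid n$, then $\nu_p(n)\geq \nu_p(a)$ for all $p$, and
\begin{equation*}
\frac{f(ap^{\nu_p(n)-\nu_p(a)})}{f(a)}=\frac{c\,f_m(p^{\nu_p(n)-\nu_p(a)})}{c}=f_m(p^{\nu_p(n)-\nu_p(a)}).
\end{equation*}
Taking the product over primes and using multiplicativity of $f_m$ together with $\nu_p(n/a)=\nu_p(n)-\nu_p(a)$ gives $\prod_p f_m(p^{\nu_p(n/a)})=f_m(n/a)$, so the right-hand side becomes $f(a)f_m(n/a)=c\,f_m(n/a)=f(n)$.

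To conclude that $f$ is Selberg multiplicative, I would set $F_p(j):=f(ap^{j-\nu_p(a)})/f(a)$ and check $F_p(0)=1$ for every $p\nmid a$, so the condition of Definition~\ref{de:Sel} is met for all but finitely many primes; the constant $f(a)$ out front is then absorbed into a single $F_{p_0}$ without disturbing this property. The main obstacle I expect is the bookkeeping in the $a\nmid n$ case: one has to be comfortable with ``negative exponents'' of $p$ in the expression $ap^{\nu_p(n)-\nu_p(a)}$ and verify that the convention $f_m(x)=0$ for non-integral $x$ propagates correctly to make both sides vanish. Everything else reduces to multiplicativity of $f_m$ and the finite-support observation $F_p(0)=1$ for $p\nmid a$.
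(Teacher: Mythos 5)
Your argument is correct. Note, though, that the paper itself does not prove this theorem: it simply cites Haukkanen's dissertation \cite{Hau}, and the only proof it writes out is for the several-variable analogue, where just one direction (semimultiplicative $\Rightarrow$ Selberg factorization) is established by the same two-case analysis you use ($a\nmid n$ gives $0=0$; $a\mid n$ reduces to multiplicativity). Your verification of the factorization works directly from Definition~\ref{de:semimu}, writing $f(n)=c\,f_m(n/a)$ and computing $f(ap^{\nu_p(n)-\nu_p(a)})/f(a)=f_m(p^{\nu_p(n)-\nu_p(a)})$, whereas the paper's several-variable proof runs through the functional-equation characterization (the analogue of Theorem~\ref{th:semi2}); the two are interchangeable here, and your version makes the bookkeeping with the convention $f_m(x)=0$ for non-integral $x$ explicit, which is exactly where the $a\nmid n$ case lives. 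For the converse (Selberg $\Rightarrow$ semimultiplicative), your route through the gcd--lcm identity $f(m)f(n)=f((m,n))f([m,n])$ via the multiset observation $\{\nu_p(m),\nu_p(n)\}=\{\nu_p((m,n)),\nu_p([m,n])\}$ is clean and genuinely instructive -- it also explains why the equivalence is special to one variable, since no such identity is available in the several-variable setting (cf.\ the paper's counterexample). Be aware, however, that Rearick's characterization is only \emph{stated} in the paper, not proved, so your converse rests on a cited result exactly as the paper's own citation of \cite{Hau} does; a fully self-contained alternative would be to read off $a$ (the minimal support point) and $F_p$ directly from the Selberg data. Two small points of hygiene: you should set aside the identically zero function at the outset (it is trivially both Selberg multiplicative and semimultiplicative, but the displayed factorization is meaningless since $f(a)=0$, just as the paper's several-variable version carries the proviso $f(a_1,\ldots,a_u)\neq 0$), and when you absorb the constant $f(a)$ into a single $F_{p_0}$ it is tidiest to choose $p_0\mid a$ so the set of primes with $F_p(0)\neq 1$ is unchanged.
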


Selberg multiplicative (or semimultiplicative) functions possess the following useful properties. 

\begin{description}

\item{(i)} Dirichlet convolution preserves  Selberg multiplicativity. 

\item{(ii)} Usual product preserves  Selberg multiplicativity. 

\item{(iii)} Compositions 
$$
f(kn), f(k/n), f(n/k), f((k, n)), f([k, n]),\quad k\in\N, 
$$ 
preserve Selberg multiplicativity. 

\end{description}

Property (i) is proved in \cite{Rearick66a}, property (ii) is easy to prove and properties (iii) are presented in \cite{Rearick66a,Selberg} without proofs. 

We close this section with a short summary: 

\begin{enumerate}[(a)]
\item Multiplicative functions are quasimultiplicative, and 
quasimultiplicative functions are semimultiplicative. 
\item Semimultiplicative functions $f$ with $f(1)\ne 0$ are quasimultiplicative. 
\item Quasimultiplicative functions $f$ with $f(1)=1$ are multiplicative. 
\item Selberg multiplicative functions are the same as semimultiplicative 
functions. 
\end{enumerate}

\subsection{Extensions of multiplicative functions of several variables}

The usual notion of multiplicative functions of several variables is presented in the following definition. 

\begin{defn}
An arithmetical function $f\colon\N^u\to\C$ of $u$ 
variables is {\sl  multiplicative} in $n_1, n_2,\ldots, n_u$ 
if 
\begin{equation}
f(n_1m_1, n_2m_2,\ldots, n_um_u)
=f(n_1, n_2,\ldots, n_u)f(m_1, m_2,\ldots, m_u)
\end{equation}
for all $n_1, n_2,\ldots, n_u\in\N$ and $m_1, m_2,\ldots, m_u\in\N$ 
with $(n_1 n_2\cdots n_u, m_1m_2\cdots m_u)=1$. 
\end{defn}  

This definition means that a multiplicative function $f$ is completely determined by its values 
$f(p^{a_1}, p^{a_2},\ldots, p^{a_u})$ at prime powers. 
In fact, 
an arithmetical $f$  is multiplicative in $n_1, n_2,\ldots, n_u$ if  and only if  
\begin{equation}
f(n_1, n_2,\ldots, n_u)
=\prod_{p\mid n_1n_2\cdots n_u }  f(p^{\nu_p(n_1)}, 
p^{\nu_p(n_2)},\ldots, p^{\nu_p(n_u)})
\end{equation}
for all $n_1, n_2,\ldots, n_u\in\N$. 
If a multiplicative function $f$ is not identically zero, then $f(1, 1,\ldots, 1)=1$. 

The concept of multiplicative functions is easy to generalize to the concept of quasimultiplicative functions.  

\begin{defn} 
An arithmetical function $f\colon\N^u\to\C$ of $u$ 
variables is  {\sl quasimultiplicative} 
in $n_1, n_2,\ldots, n_u$ if 
there exists a nonzero constant $c$ such that 
\begin{equation}
c\, f(n_1m_1, n_2m_2,\ldots, n_um_u) 
=
f(n_1, n_2,\ldots, n_u)f(m_1, m_2,\ldots, m_u)
\end{equation}
for all $n_1, n_2,\ldots, n_u\in\N$ and $m_1, m_2,\ldots, m_u\in\N$ 
with $(n_1 n_2\cdots n_u, m_1m_2\cdots m_u)=1$. 
\end{defn} 

It is easy to see that an arithmetical function $f$ not identically zero is quasimultiplicative if and only if $f(1, 1,\ldots, 1)\ne0$ and 
\begin{equation}
f(1, 1,\ldots, 1) f(n_1m_1, n_2m_2,\ldots, n_um_u) 
=
f(n_1, n_2,\ldots, n_u)f(m_1, m_2,\ldots, m_u)
\end{equation}
for all $n_1, n_2,\ldots, n_u\in\N$ and $m_1, m_2,\ldots, m_u\in\N$ 
with $(n_1 n_2\cdots n_u, m_1m_2\cdots m_u)=1$.  

Quasimultiplicative $f$ with $f(1, 1,\ldots, 1)=1$ are multiplicative functions, and an arithmetical function $f$ not identically zero  is quasimultiplicative if  and only if  $f(1, 1,\ldots, 1)\ne 0$ and $f/f(1, 1,\ldots, 1)$ is multiplicative. 

One of Selberg's motivations to define multiplicative functions via Definition \ref{de:Sel} was that 
this concept has a natural generalization to arithmetical functions of several variables. 

\begin{defn} 
An arithmetical function $f\colon\N^u\to\C$ of $u$ 
variables is  {\sl Selberg multiplicative}
in $n_1, n_2,\ldots, n_u$ if 
for each prime $p$ there exists $F_p\colon\N_0^u\to\C$ 
with $F_p(0, 0,\ldots, 0)=1$ for
all but finitely many primes $p$ such that
\begin{equation}\label{e:Sel-exp}
f(n_1, n_2,\ldots, n_u)=\prod_{p\in\P} F_p(\nu_p(n_1), 
\nu_p(n_2),\ldots, \nu_p(n_u))
\end{equation}
for all $n_1, n_2,\ldots, n_u\in\N$.
\end{defn}

Multiplicative functions $f$ are Selberg multiplicative with 
\begin{equation}
F_p(\nu_p(n_1), \nu_p(n_2),\ldots, \nu_p(n_u))=f(p^{\nu_p(n_1)}, 
p^{\nu_p(n_2)},\ldots, p^{\nu_p(n_u)}). 
\end{equation} 
Quasimultiplicative functions are Selberg multiplicative with
Selberg factorization 
\begin{equation}\label{eq:Sel-factor}
f(n_1, n_2,\ldots, n_u)= f(1, 1,\ldots, 1)\prod_{p\in\P}
\left(\frac{f(p^{\nu_p(n_1)}, p^{\nu_p(n_2)},\ldots, p^{\nu_p(n_u)})}
{f(1, 1,\ldots, 1)}\right)
\end{equation}
provided that $f(1, 1,\ldots, 1)\ne 0$. 
Semimultiplicative functions of several variables have not hitherto been considered in the literature. We suggest the following definition which reduces to the concept of semimultiplicative functions of 
Rearick for $u=1$.  

\begin{defn} 
An arithmetical function $f\colon\N^u\to\C$ of $u$ 
variables is {\sl semimultiplicative}
in $n_1, n_2,\ldots, n_u$ 
if there exists a nonzero constant $c$,  positive integers $a_1, a_2, \ldots, a_u$ and a multiplicative function $f_m$ such that
\begin{equation}
f(n_1, n_2,\ldots, n_u)=c\  f_m(n_1/a_1, n_2/a_2,\ldots, n_u/a_u)
\end{equation}
for all $n_1, n_2,\ldots, n_u\in\N$. 
\end{defn}

Note that if $f$ is not identically zero, then $f(a_1, a_2,\ldots, a_u)=c\ne 0$, and 
$f(n_1, n_2,\ldots, n_u)=0$ if $n_i<a_i$ for some $i=1, 2, \ldots, u$. 
A generalization of Theorem \ref{th:semi1} can be written as follows:  

\begin{thm}
An arithmetical function $f$ not identically zero is semimultiplicative 
if  and only if   there exist positive integers $a_1, a_2, \ldots, a_u$ such that 
$f(a_1, a_2,\ldots, a_u)\ne 0$ and
$f(a_1 x_1, a_2 x_2,\ldots, a_u x_u)$ is an arithmetical function (i.e. 
$f(a_1 x_1, a_2 x_2,\ldots, a_u x_u)=0$ if $x_1, x_2,\ldots, x_u$ are not positive integers) and 
$$
{f(a_1 n_1, a_2 n_2,\ldots, a_u n_u)\over f(a_1, a_2,\ldots, a_u)} 
$$
is multiplicative in $n_1, n_2,\ldots, n_u$. 
\end{thm}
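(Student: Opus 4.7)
The plan is to imitate the proof of Theorem \ref{th:semi1} almost verbatim, replacing the single positive integer $a$ by the tuple $(a_1,\ldots,a_u)$ and the multiplicative function of one variable by a multiplicative function of $u$ variables; the argument has two directions and neither should present any real obstacle, the only place requiring care being the interpretation of the vanishing condition on $f$ outside the multiples of $(a_1,\ldots,a_u)$.

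For the forward direction, I would start from the defining representation $f(n_1,\ldots,n_u)=c\,f_m(n_1/a_1,\ldots,n_u/a_u)$ provided by the definition of semimultiplicativity. Since $f$ is not identically zero, $f_m$ is not identically zero either, whence $f_m(1,\ldots,1)=1$ (this is the standard fact recalled earlier in the paper for multiplicative functions of several variables), giving $f(a_1,\ldots,a_u)=c\neq 0$. The convention built into Definition~\ref{de:semimu} that $f_m$ vanishes when any argument fails to be a positive integer then yields the required vanishing of $f(a_1x_1,\ldots,a_ux_u)$ when some $x_i$ is not a positive integer, and division gives $f(a_1n_1,\ldots,a_un_u)/f(a_1,\ldots,a_u)=f_m(n_1,\ldots,n_u)$, which is multiplicative in $n_1,\ldots,n_u$.

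For the converse, I would set $c:=f(a_1,\ldots,a_u)\neq 0$ and define the candidate multiplicative function by
\[
f_m(x_1,\ldots,x_u):=\frac{f(a_1x_1,\ldots,a_ux_u)}{f(a_1,\ldots,a_u)}
\]
when $x_1,\ldots,x_u$ are positive integers, and $f_m(x_1,\ldots,x_u):=0$ otherwise. The multiplicativity of $f_m$ on $\N^u$ is the given hypothesis, and the extension by zero makes $f_m$ a bona fide multiplicative function in the sense of Definition~\ref{de:semimu}. To finish I would verify the identity $f(n_1,\ldots,n_u)=c\,f_m(n_1/a_1,\ldots,n_u/a_u)$ by splitting into two cases: if $a_i\mid n_i$ for every $i$, write $n_i=a_ix_i$ and both sides equal $c\,f_m(x_1,\ldots,x_u)$ by construction; if $a_i\nmid n_i$ for some $i$, the right-hand side is zero by the defining convention for $f_m$, while the left-hand side is zero by the assumed vanishing condition on $f(a_1x_1,\ldots,a_ux_u)$ applied with the rational $x_i=n_i/a_i$.

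The only conceptual step that needs attention is interpreting the hypothesis ``$f(a_1x_1,\ldots,a_ux_u)$ is an arithmetical function'' as the statement that $f(n_1,\ldots,n_u)=0$ whenever some $a_i\nmid n_i$; once this identification is made, the argument is essentially formal and parallels the one-variable proof.
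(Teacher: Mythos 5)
Your proposal is correct and takes essentially the same route as the paper: the paper's proof is just the one-line choice $f_m(n_1,\ldots,n_u)=f(a_1n_1,\ldots,a_un_u)/f(a_1,\ldots,a_u)$, which is exactly your construction in the converse direction. Your additional details (the forward direction via $f_m(1,\ldots,1)=1$ and the case split on whether each $a_i\mid n_i$) simply spell out what the paper leaves implicit.
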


\begin{proof} 
Take 
$$
f_m(n_1, n_2,\ldots, n_u)
={f(a_1 n_1, a_2 n_2,\ldots, a_u n_u)\over f(a_1, a_2,\ldots, a_u)}. 
$$
\end{proof}

Theorem \ref{th:semi2} can be generalized as follows:  

\begin{thm}
An arithmetical function $f$ not identically zero is semimultiplicative 
if  and only if  there exist positive integers $a_1, a_2, \ldots, a_u$ such that  
$f(a_1, a_2,\cdots, a_u)\ne 0$ and
$f(n_1, n_2,\ldots, n_u)=0$ whenever 
$a_i\nmid n_i$ for some $i=1, 2,\ldots, u$ 
and 
\begin{equation}\label{eq:mult-semi}
f(a_1, a_2,\ldots, a_u) f(a_1 m_1 n_1, a_2 m_2 n_2,\ldots, a_u m_u n_u)
=   f(a_1 m_1, a_2 m_2,\ldots, a_u m_u)
 f(a_1 n_1, a_2 n_2,\ldots, a_u n_u)
\end{equation}
for all $n_1, n_2,\ldots, n_u\in\N$ and $m_1, m_2,\ldots, m_u\in\N$ 
with $(n_1 n_2\cdots n_u, m_1m_2\cdots m_u)=1$. 
\end{thm}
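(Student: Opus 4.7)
The plan is to reduce the claim to the multivariable generalization of Theorem \ref{th:semi1}, which has just been proved, by passing to the auxiliary function
$$
g(n_1,\ldots,n_u) := \frac{f(a_1 n_1, a_2 n_2,\ldots,a_u n_u)}{f(a_1, a_2,\ldots,a_u)}.
$$
That theorem asserts that $f$ (not identically zero) is semimultiplicative with the $a_i$ as parameters precisely when (a) $f(a_1 x_1, a_2 x_2,\ldots,a_u x_u)$ is an arithmetical function in $(x_1,\ldots,x_u)$ and (b) $g$ is multiplicative. My task is therefore to show that, under the standing assumption $f(a_1,\ldots,a_u)\ne 0$, the two conditions in the current theorem (the vanishing condition on $f$ and the identity \eqref{eq:mult-semi}) are equivalent, respectively, to (a) and (b).

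For the vanishing part, I would observe that $f(a_1 x_1,\ldots,a_u x_u)$ being an arithmetical function in $(x_1,\ldots,x_u)$ means it vanishes unless every $x_i\in\N$. Setting $n_i = a_i x_i$, this is precisely the condition that $f(n_1,\ldots,n_u) = 0$ whenever $a_i\nmid n_i$ for some $i$. For the multiplicative part, I would divide \eqref{eq:mult-semi} through by $f(a_1,\ldots,a_u)^2$ to obtain
$$
g(m_1 n_1, m_2 n_2,\ldots,m_u n_u) = g(m_1, m_2,\ldots,m_u)\, g(n_1, n_2,\ldots,n_u)
$$
whenever $(m_1\cdots m_u,\, n_1\cdots n_u) = 1$, which is exactly multiplicativity of $g$ (noting $g(1,\ldots,1)=1$); the converse direction only clears denominators.

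There is really no obstacle here: the proof is a straightforward unwinding of definitions, and the only step deserving a moment's care is checking that the vanishing hypothesis ``$f(n_1,\ldots,n_u) = 0$ whenever $a_i\nmid n_i$ for some $i$'' and the arithmeticity hypothesis ``$f(a_1 x_1,\ldots,a_u x_u)$ is an arithmetical function in $(x_1,\ldots,x_u)$'' are synonymous. Once that identification is made, both directions of the biconditional follow from the preceding theorem in essentially one line each.
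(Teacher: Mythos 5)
Your proposal is correct; note that the paper states this theorem without giving a proof, evidently regarding it as a routine consequence of the preceding theorem, and that is exactly the reduction you carry out. For fixed $a_1,\ldots,a_u$ with $f(a_1,\ldots,a_u)\ne 0$, the vanishing hypothesis is indeed just a restatement (via $n_i=a_ix_i$) of the requirement that $f(a_1x_1,\ldots,a_ux_u)$ be an arithmetical function, and dividing or multiplying \eqref{eq:mult-semi} by $f(a_1,\ldots,a_u)^2$ translates it into the multiplicativity of $g$ and back, so both directions follow from the multivariable analogue of Theorem \ref{th:semi1}.
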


\begin{thm}
Each semimultiplicative function $f$ is Selberg multiplicative and possesses a Selberg factorization as
\begin{equation}\label{eq:Sel-factor-u}
f(n_1, n_2,\ldots, n_u)= f(a_1, a_2,\ldots, a_u)\prod_{p\in\P}
\left(\frac{f(a_1p^{\nu_p(n_1)-\nu_p(a_1)},a_2p^{\nu_p(n_2)-\nu_p(a_2)},\ldots, a_up^{\nu_p(n_u)-\nu_p(a_u)})}
{f(a_1, a_2,\ldots, a_u)}\right)
\end{equation}
provided that $f(a_1, a_2,\ldots, a_u)\ne 0$. 
\end{thm}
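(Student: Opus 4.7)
The plan is to reduce the desired Selberg factorization of $f$ to the prime-power product expansion for multiplicative functions of $u$ variables applied to the auxiliary $f_m$ supplied by the definition of semimultiplicativity.

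First I will write $f(n_1,\ldots,n_u)=c\,f_m(n_1/a_1,\ldots,n_u/a_u)$ for a nonzero constant $c$, positive integers $a_1,\ldots,a_u$, and a multiplicative $f_m$ (extended by $0$ on non-integer inputs), directly from the definition. Specializing at $n_i=a_i$ and invoking the hypothesis $f(a_1,\ldots,a_u)\ne 0$ forces $f_m\not\equiv 0$, hence $f_m(1,\ldots,1)=1$, and so $c=f(a_1,\ldots,a_u)$.

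Next I will expand $f_m$ using its multiplicativity. Provided $a_i\mid n_i$ for every $i$, each $n_i/a_i$ is a positive integer with $\nu_p(n_i/a_i)=\nu_p(n_i)-\nu_p(a_i)$, so
\[
f_m(n_1/a_1,\ldots,n_u/a_u)=\prod_{p\in\P}f_m\bigl(p^{\nu_p(n_1)-\nu_p(a_1)},\ldots,p^{\nu_p(n_u)-\nu_p(a_u)}\bigr).
\]
Rewriting each factor via $f_m(p^{e_1},\ldots,p^{e_u})=f(a_1p^{e_1},\ldots,a_up^{e_u})/f(a_1,\ldots,a_u)$ (valid for $e_i\ge 0$) and multiplying through by $c=f(a_1,\ldots,a_u)$ reproduces the claimed identity. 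Since $\nu_p(n_i)=\nu_p(a_i)=0$ for all primes outside the finite set dividing $n_1a_1\cdots n_ua_u$, the corresponding $p$-th factor collapses to $1$, which confirms that the expression is a legitimate Selberg factorization in the sense of Definition \ref{de:Sel}.

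The main obstacle, and the subtle point of the argument, is the remaining case in which $a_i\nmid n_i$ for some index $i$. The preceding theorem forces the left-hand side to be $0$; to match, I will exhibit a prime $p$ making the corresponding $p$-factor on the right vanish. Any prime $p$ with $\nu_p(n_i)<\nu_p(a_i)$ will do: dividing the $i$-th argument $a_ip^{\nu_p(n_i)-\nu_p(a_i)}$ of the numerator by $a_i$ produces $p^{\nu_p(n_i)-\nu_p(a_i)}$, which is not a positive integer, so $f_m$—and hence $f$—evaluates to $0$ on that tuple. Once this null case is cleanly verified, the rest is routine translation between $f_m$ and $f$.
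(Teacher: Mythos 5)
Your proof is correct and follows essentially the same route as the paper: you split into the case where some $a_i\nmid n_i$ (both sides vanish, via a prime with $\nu_p(n_i)<\nu_p(a_i)$) and the case $a_i\mid n_i$ for all $i$, where the prime-power expansion of a multiplicative function of $u$ variables yields the stated factorization. The only cosmetic difference is that you work directly with the auxiliary multiplicative function $f_m$ from the definition of semimultiplicativity, whereas the paper invokes its characterization $(\ref{eq:mult-semi})$; these amount to the same computation.
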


\begin{proof} 
Assume that $a_i\nmid n_i$ for some $i=1, 2,\ldots, u$. 
Then $f(n_1, n_2,\ldots, n_u)=0$. 
Further, $\nu_p(n_i)-\nu_p(a_i)<0$ and therefore 
$a_i\nmid a_i p^{\nu_p(n_i)-\nu_p(a_i)}$. 
This shows that 
$$
f(a_1p^{\nu_p(n_1)-\nu_p(a_1)},a_2p^{\nu_p(n_2)-\nu_p(a_2)},
\ldots, a_up^{\nu_p(n_u)-\nu_p(a_u)})=0,  
$$
and thus the right-hand side of $(\ref{eq:Sel-factor-u})$ is also zero. 
So, $(\ref{eq:Sel-factor-u})$ holds in this case. 

Assume that $a_i\mid n_i$ for all $i=1, 2,\ldots, u$. 
Then  $n_i=a_i m_i$ for all $i=1, 2,\ldots, u$. 
Thus, applying $(\ref{eq:mult-semi})$ we obtain 
\begin{eqnarray*}
f(n_1, n_2,\ldots, n_u)
&=&f(a_1 m_1, a_2 m_2,\ldots, a_u m_u)\\
&=&f(a_1, a_2,\ldots, a_u)\prod_{p\in\P}
\left(\frac{f(a_1p^{\nu_p(m_1)},a_2p^{\nu_p(m_2)},\ldots, a_up^{\nu_p(m_u)})}
{f(a_1, a_2,\ldots, a_u)}\right). 
\end{eqnarray*}
This completes the proof. 
\end{proof}

\begin{rem}
There exist Selberg multiplicative functions of several variables 
that are not semimultiplicative. 
For example, let  $f\colon\N^2\to\C$ be a Selberg multiplicative function such that 
$F_p(s_1, s_2)=1$ except for that $F_2(0, 0)=0$. 
This means that 
\begin{eqnarray*}
f(n_1, n_2)=
\begin{cases}
0 \text{  if $2\nmid n_1$ and $2\nmid n_2$,} \\
1 \text{  otherwise}. 
\end{cases}
\end{eqnarray*}
Thus, for instance, $f(1, 1)=0, f(1, 2)=f(2, 1)=1$. 
Then $f$ is not semimultiplicative. 
In fact, 
since $f(1, 2)\ne 0$, we have $a_1=1$ and $a_2\in\{1, 2\}$, 
and 
since $f(2, 1)\ne 0$, we have $a_1\in\{1, 2\}$ and $a_2=1$. 
Therefore $a_1=1$ and $a_2=1$, but this is impossible, since $f(1, 1)=0$. 
\end{rem}

\begin{rem}
It is easy to see that Selberg multiplicative functions of several variables preserve 
the Dirichlet convolution, the usual product and also compositions like in item (iii) in Section 2.1. It is likewise easy to see that semimultiplicative functions of several variables preserve the Dirichlet convolution but it is not known whether semimultiplicative functions of several variables preserve the usual product and compositions like in item (iii) in Section 2.1. 
\end{rem}


\section{Ramanujan's sum and its analogue with respect to regular integers}

\subsection{Definitions and convolutional expressions}

Ramanujan's \cite{R} sum $c_r(n)$ is defined as
\begin{equation}
c_r(n)=\sum_{\substack{a \ppmod r\\ (a, r)=1}}\exp(2\pi ian/r). 
\end{equation}
It can be written as 
\begin{equation}\label{eq:ram-conv}
c_r(n)=\sum_{d \mid (n,r)} d\mu(r/d),  
\end{equation}
which may be considered an arithmetical expression or a convolutional expression with respect to $n$ or $r$. See \cite{A,M,Si}.

Let $\eta_k$ denote the arithmetical function defined as $\eta_k(m)=m$ if $m\mid k$, and $\eta_k(m)=0$ otherwise. 
Equation $(\ref{eq:ram-conv})$ can be written in terms of a convolution with respect to $n$ as 
\begin{equation}\label{eq:ram-conv-n}
c_r(n)  =  [\eta_r(\cdot) \mu\big(r/(\cdot)\big) \ast 1(\cdot)] (n),     
\end{equation}
where $\ast$ is the Dirichlet convolution and $1(n)=1$ for all $n\in\N$.
Equation $(\ref{eq:ram-conv})$ can also be written in terms of a convolution with respect to $r$ as 
\begin{equation}\label{eq:ram-conv-r}
c_r(n)  =  [\eta_n\ast\mu] (r).    
\end{equation}

An element $a$ in a ring $R$ is said to be regular (following von Neumann)
if there exists  $x\in R$ such that $axa=a$.
An integer $a$  is said to be regular (mod $r$)
if there exists an integer $x$ such that $a^2x\equiv a\pmod{r}$. 
A regular integer (mod $r$) is regular in the ring $\Z_r$ in the sense of von Neumann.
An integer $a$ is invertible (mod $r$) if $(a, r)=1$. 
It is clear that each invertible integer (mod $r$) is regular (mod $r$). 
See \cite{AO,HT,Tot2008}

An analogue of Ramanujan's sum with respect regular integers (mod $r$) is defined as
\begin{equation}
\overline{c}_r(n) 
= \sum_{
    \substack{a \ppmod r\\
              a \text{ regular}\ppmod r}}
    \exp(2 \pi i a n / r), 
\end{equation} 
where $n\in\Z$ and $r\in\N$. See \cite{HT}. 
The function $\overline{c}_r(n)$ also has an arithmetical (or a convolutional) expression. For this purpose we introduce some concepts. 

A divisor $d$ of $n$ is said to be a unitary divisor of $n$ (written as $d\| n$) if $(d, n/d)=1$. 
The unitary convolution of arithmetical functions $f$ and $g$ is defined as 
\begin{equation}
(f\oplus g)(n)=\sum_{d\| n} f(d) g(n/d). 
\end{equation}
For material on unitary convolution we refer to \cite{Cohen60,M,Si,V}. 

Let $r\in\N$ be fixed. 
Let $g_{r}$ denote the characteristic function of the unitary divisors of $r$, 
that is, $g_{r}(n)=1$ if $n\| r$,
and $g_{r}(n)=0$ otherwise.
Then $g_{r}(n)$ is multiplicative in $n$.
Let  $\overline{\mu}_{r}$ denote the function defined by 
\begin{equation}\label{eq:mu-r}
(\overline{\mu}_{r} \ast 1)(n) = g_{r}(n). 
\end{equation}
Then $\overline{\mu}_{r}(n)$ is multiplicative in $n$ given as follows:\\
(i)\ If $p\| r$, then $\overline{\mu}_{r}(p)=0$,
$\overline{\mu}_{r}(p^2)=-1$, $\overline{\mu}_{r}(p^j)=0 $ for $j \geq 3$.\\
(ii)\ If $p^a\| r$ with $a \geq 2$, then $\overline{\mu}_{r}(p)=-1$, 
$\overline{\mu}_{r}(p^a)=1$, $\overline{\mu}_{r}(p^{a+1})=-1$, $\overline{\mu}_{r}(p^j) = 0$    
for $j \neq 0,1,a,a+1$.\\
(iii)\ If $p\nmid r$, then 
$\overline{\mu}_{r}(p)=-1$, $\overline{\mu}_{r}(p^j)=0$ for $j\geq 2$.\\
In addition, $\overline{\mu}_{r}(1)=1$.  
 
Now, we are able to present an arithmetical expression for $\overline{c}_r(n)$. 
The function $\overline{c}_r(n)$ can be written as
\begin{equation}\label{eq:ram-ana-conv}
\overline{c}_r(n)  = \sum_{d\mid (n, r)} d \overline{\mu}_{r}(r/d).  
\end{equation}
This can also be written as the Dirichlet convolution with respect to 
the variable $n$ in the form
\begin{equation}\label{eq:ram-ana-conv-n}
\overline{c}_r(n)  =  [\eta_r(\cdot) \overline{\mu}_{r}\big(r/(\cdot)\big) \ast 1(\cdot)] (n).    
\end{equation}
A unitary convolution expression of $\overline{c}_r(n)$ with respect to the variable $r$ is presented in \cite{HT} as
\begin{equation}\label{eq:ram-ana-conv-r}
\overline{c}_{r}(n) = [c_{({\bf\cdot})}(n)\oplus 1({\bf\cdot})](r). 
\end{equation}


\subsection{Even functions (mod $r$)}\label{sec:even}

Let $r\in\N$ be fixed. 
A function $f\colon\Z \to \C$ is said to be $r$-periodic or periodic (mod $r$)  if
$f(n)=f(n+r)$ for all $n\in \Z$. 
A function $f\colon\Z \to \C$ is said to be $r$-even or even (mod $r$) if
$f(n)=f((n, r))$ for all $n\in \Z$. 
Each $r$-even function is $r$-periodic.
Ramanujan's sum $c_r(n)$  and its analogue $\overline{c}_r(n)$ are examples of $r$-even functions. 

For material on $r$-periodic and $r$-even functions we refer to 
\cite{A,C,M,Sa,SS,Si,TH}. 

\subsection{Multiplicative properties}

We here present some multiplicative properties of the usual Ramanujan's sum and its analogue with respect to regular integers (mod $r$). 
We begin with a general theorem on multiplicative $r$-even functions. 

\begin{thm}\label{th:gen-mult}
Let $f(n, r)$ be an arithmetical function of two variables. 
If for each $r\ge 1$, $f(n, r)$ is $r$-even as a function of $n$ and 
if for each $n\ge 1$, $f(n, r)$ is multiplicative in $r$, 
then $f(n, r)$ is multiplicative as a function of two variables $r$ and $n$ $(\in\N)$. 
\end{thm}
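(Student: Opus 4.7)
The plan is to verify the two–variable multiplicativity identity
$f(n_1 n_2, r_1 r_2) = f(n_1, r_1)\,f(n_2, r_2)$
directly from the coprimality hypothesis $(n_1 r_1, n_2 r_2)=1$, moving information back and forth between the two ``even in $n$'' and ``multiplicative in $r$'' structures. The first step is to unpack the hypothesis into the four individual coprimality statements $(n_1,n_2)=1$, $(r_1,r_2)=1$, $(n_1,r_2)=1$, $(n_2,r_1)=1$, which will be used repeatedly.

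Next I would use $r$-evenness in the first slot to replace the left-hand side by $f\bigl((n_1 n_2, r_1 r_2),\, r_1 r_2\bigr)$ and then simplify the gcd. Using $(n_1,r_2)=(n_2,r_1)=1$ one shows $(n_1,r_1 r_2)=(n_1,r_1)$ and $(n_2,r_1 r_2)=(n_2,r_2)$, and then using $(n_1,n_2)=1$ together with $(r_1,r_2)=1$ one obtains
\begin{equation*}
(n_1 n_2,\, r_1 r_2) \;=\; (n_1,r_1)\,(n_2,r_2).
\end{equation*}
So $f(n_1 n_2, r_1 r_2) = f\bigl((n_1,r_1)(n_2,r_2),\, r_1 r_2\bigr)$.

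Then I would apply multiplicativity in the second slot to the coprime pair $(r_1,r_2)=1$, splitting this into
\begin{equation*}
f\bigl((n_1,r_1)(n_2,r_2),\,r_1\bigr)\; f\bigl((n_1,r_1)(n_2,r_2),\,r_2\bigr).
\end{equation*}
For the first factor I reduce the $n$-argument modulo $r_1$ using $r_1$-evenness: since $(n_2,r_2)\mid r_2$ and $(r_1,r_2)=1$, the factor $(n_2,r_2)$ contributes nothing to the gcd with $r_1$, so $\bigl((n_1,r_1)(n_2,r_2),r_1\bigr)=(n_1,r_1)$, and hence the first factor equals $f\bigl((n_1,r_1),r_1\bigr)=f(n_1,r_1)$ by $r_1$-evenness again. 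The same computation symmetric in the indices collapses the second factor to $f(n_2,r_2)$, which completes the identity.

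The only real obstacle is the bookkeeping of gcd identities in the second and fourth steps; once one keeps track of which prime divisors can land in $r_1$ versus $r_2$ under the given coprimalities, everything drops out, and both hypotheses ($r$-evenness in $n$, multiplicativity in $r$) are each used exactly twice.
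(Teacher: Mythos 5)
Your proof is correct and is essentially the same direct verification as the paper's: the paper first splits $f(mn,rs)=f(mn,r)\,f(mn,s)$ by multiplicativity in the second variable and then reduces each factor via evenness and $(mn,r)=(m,r)$, whereas you apply evenness first and therefore need the extra gcd identity $(n_1 n_2, r_1 r_2)=(n_1,r_1)(n_2,r_2)$ before splitting. The ingredients and gcd bookkeeping are otherwise identical, so this is the same argument with the two hypotheses applied in the opposite order.
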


\begin{proof} See \cite{HT}. 
Let $(mr, ns)=1$. Then 
\begin{eqnarray*}
f(mn, rs) 
&=&  f(mn, r) f(mn, s) = f((mn, r), r)f((mn, s), s)\\
&=&  f((m, r), r) f((n, s), s)= f(m, r)f(n, s). 
\end{eqnarray*}
The proof is completed. 
\end{proof}

The following multiplicative properties of Ramanujan's sum are known in the literature, see e.g. \cite{AA,Hau,SH}. 

\begin{thm}\label{th:mult}
{\rm  (1)} For each $n\in\Z$, $c_r(n)$ is multiplicative in $r$.

{\rm  (2)} For each $r\in\N$, $c_r(n)$ is semimultiplicative in $n$ $(\in\N)$. 

{\rm  (3)} $c_r(n)$ is multiplicative
as a function of two variables $r$ and $n$ $(\in\N)$. 
\end{thm}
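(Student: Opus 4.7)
The plan is to handle the three parts in order, with part (2) being the main technical step and parts (1) and (3) following almost formally from tools already introduced.

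For part (1) I would use the convolutional expression $c_r(n) = [\eta_n \ast \mu](r)$ recorded in (\ref{eq:ram-conv-r}). With $n$ fixed, $\eta_n(m) = m \cdot [m \mid n]$ is multiplicative in $m$, since the indicator $[m \mid n]$ is multiplicative (if $(m_1,m_2)=1$, then $m_1 m_2 \mid n$ iff $m_1 \mid n$ and $m_2 \mid n$) and the identity $m \mapsto m$ is multiplicative. Because $\mu$ is multiplicative and the Dirichlet convolution of two multiplicative functions is multiplicative, $c_r(n)$ is multiplicative in $r$.

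For part (2) my plan is to exhibit an explicit Selberg factorization of $c_r(n)$ in the variable $n$; by the equivalence between semimultiplicativity and Selberg multiplicativity established earlier, this will suffice. Using part (1), I would first write $c_r(n) = \prod_p c_{p^{a_p}}(n)$ with $a_p = \nu_p(r)$, reducing the problem to the prime-power case. Then, starting from (\ref{eq:ram-conv}) and setting $b = \min(\nu_p(n), a)$, only the terms with $\mu(p^{a-j}) \ne 0$ survive in the sum for $c_{p^a}(n)$, i.e.\ $j \in \{a-1, a\}$. A short case analysis on whether $b=a$, $b=a-1$, or $b \le a-2$ yields $c_{p^a}(n) \in \{p^{a-1}(p-1),\, -p^{a-1},\, 0\}$; in particular $c_{p^a}(n)$ depends only on $\nu_p(n)$. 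Defining $F_p(\nu) := c_{p^{a_p}}(p^\nu)$ and observing that $p \nmid r$ forces $a_p = 0$ and hence $F_p \equiv 1$, I obtain a valid Selberg factorization $c_r(n) = \prod_p F_p(\nu_p(n))$ with $F_p(0) = 1$ for all but finitely many primes.

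For part (3) I would invoke Theorem \ref{th:gen-mult} applied to $f(n,r) := c_r(n)$. The hypotheses hold: $c_r(n)$ depends on $n$ only through $(n,r)$ (visible from (\ref{eq:ram-conv})) and so is $r$-even in $n$ as noted in Section \ref{sec:even}, and $c_r(n)$ is multiplicative in $r$ by part (1). The conclusion of that theorem is exactly the desired joint multiplicativity in $r$ and $n$.

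I expect the main obstacle to be part (2): semimultiplicativity in $n$ is genuinely nontrivial because $c_r(1) = \mu(r) = 0$ whenever $r$ is not squarefree, so $c_r$ is in general not quasimultiplicative and one cannot simply reduce to the classical multiplicative case. The essential content lies in the prime-power computation, which is where the Selberg factorization is actually produced; the rest of the theorem is bookkeeping around this identity and Theorem \ref{th:gen-mult}.
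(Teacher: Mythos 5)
Your proposal is correct, and parts (1) and (3) coincide with the paper's argument: (1) via the convolution $(\ref{eq:ram-conv-r})$ in the variable $r$, and (3) via Theorem \ref{th:gen-mult} combined with $r$-evenness and part (1). Part (2), however, takes a genuinely different route. The paper works directly from the convolution $(\ref{eq:ram-conv-n})$ in the variable $n$ and invokes the closure properties of semimultiplicative functions: $\eta_r$ and $1$ are multiplicative hence semimultiplicative, $\mu(r/n)$ is semimultiplicative because the composition $n\mapsto f(r/n)$ preserves semimultiplicativity (property (iii) of Section 2.1), and the usual product and Dirichlet convolution of semimultiplicative functions are semimultiplicative. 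You instead reduce to prime powers via part (1), evaluate $c_{p^a}(n)$ explicitly from $(\ref{eq:ram-conv})$ to see it depends only on $\nu_p(n)$, and assemble an explicit Selberg factorization $c_r(n)=\prod_p F_p(\nu_p(n))$ with $F_p\equiv 1$ for $p\nmid r$, then pass to semimultiplicativity by the Selberg--semimultiplicative equivalence. Your route is more self-contained: it avoids the composition property (iii), which the paper notes is stated in \cite{Rearick66a,Selberg} without proof, and it produces as a by-product the explicit values $(\ref{eq:ram-values})$ that the paper only records later in a remark (from which one also reads off $a=r/\gamma(r)$ in Definition \ref{de:semimu}). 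The paper's route buys brevity and structure, exhibiting $c_r(n)$ as built from simpler semimultiplicative pieces by operations known to preserve the class, at the cost of leaning on those unproved closure properties.
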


\begin{proof}
(1) Equation $(\ref{eq:ram-conv-r})$ says that $c_r(n)$ in $r$ is the Dirichlet convolution of the function $\eta_{n}(r)$ in $r$ and the M\"{o}bius function $\mu(r)$. Since these functions are multiplicative in $r$ and the Dirichlet convolution of two multiplicative functions is multiplicative \cite{M}, Ramanujan's sum $c_r(n)$ is multiplicative in $r$. 

(2) We utilize Equation $(\ref{eq:ram-conv-n})$. The functions 
$\eta_{r}(n)$ and $1(n)$ are multiplicative in $n$ and therefore they are also semimultiplicative in $n$. 
The function $\mu(n)$ is  multiplicative in $n$ and thus 
$\mu\big(r/n\big)$ is semimultiplicative in $n$. 
The usual product  and the Dirichlet convolution of semimultiplicative functions is semimultiplicative. 
This shows that $c_r(n)$ is semimultiplicative in $n$. 

(3) This follows directly from Theorem \ref{th:gen-mult}. 
\end{proof}

We next present multiplicative properties of the analogue of Ramanujan's sum with respect to regular integers (mod $r$), see \cite{HT}. 

\begin{thm}\label{th:mult-ana}
{\rm  (1)} For each $n\in\Z$, $\overline{c}_r(n)$ is multiplicative in $r$.

{\rm  (2)} For each $r\in\N$, $\overline{c}_r(n)$ is semimultiplicative in $n$ $(\in\N)$. 

{\rm  (3)} $\overline{c}_r(n)$ is multiplicative
as a function of two variables $r$ and $n$ $(\in\N)$. 
\end{thm}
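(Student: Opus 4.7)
The plan is to mirror the proof of Theorem \ref{th:mult} almost line by line, with $\overline{\mu}_r$ playing the role previously occupied by $\mu$ and the unitary convolution replacing the Dirichlet convolution where appropriate.

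For part (1), I would start from the unitary convolution representation $(\ref{eq:ram-ana-conv-r})$, which exhibits $\overline{c}_r(n)$ as $[c_{(\cdot)}(n)\oplus 1(\cdot)](r)$. By Theorem \ref{th:mult}(1), $c_r(n)$ is multiplicative in $r$, and the constant function $1$ is trivially multiplicative in $r$. Since the unitary convolution of two functions multiplicative in $r$ is again multiplicative in $r$ (a classical fact found, e.g., in \cite{Cohen60,Si}), this immediately yields that $\overline{c}_r(n)$ is multiplicative in $r$.

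For part (2), I would use the Dirichlet convolution representation $(\ref{eq:ram-ana-conv-n})$. The functions $\eta_r(n)$ and $1(n)$ are multiplicative in $n$, hence also semimultiplicative. The function $\overline{\mu}_r(n)$ is multiplicative in $n$, as recorded in the rules (i)--(iii) following equation $(\ref{eq:mu-r})$; consequently $\overline{\mu}_r(r/n)$ is semimultiplicative in $n$ by the composition property (iii) of Section 2.1. Since both the usual product (property (ii)) and the Dirichlet convolution (property (i)) preserve semimultiplicativity, the whole expression $[\eta_r(\cdot)\,\overline{\mu}_r(r/(\cdot)) \ast 1(\cdot)](n) = \overline{c}_r(n)$ is semimultiplicative in $n$.

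For part (3), I would simply invoke Theorem \ref{th:gen-mult}: $\overline{c}_r(n)$ is $r$-even as a function of $n$ (noted in Section \ref{sec:even}) and multiplicative in $r$ by part (1), so the theorem yields joint multiplicativity in $(r,n)$.

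The only substantive point that is not purely formal is the multiplicativity of $\overline{\mu}_r(n)$ in $n$; the paper lists its values at prime powers but does not derive it. This can be checked from the defining relation $\overline{\mu}_r \ast 1 = g_r$ together with the multiplicativity of $g_r(n)$ in $n$ and the general fact that the Dirichlet inverse of a multiplicative function is multiplicative. Once this is accepted, the remaining argument is essentially a transcription of the proof of Theorem \ref{th:mult}, and I do not expect any real obstacle.
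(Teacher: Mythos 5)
Your proposal follows essentially the same route as the paper's own proof: part (1) via the unitary convolution representation $(\ref{eq:ram-ana-conv-r})$ and the preservation of multiplicativity under unitary convolution, part (2) via the Dirichlet convolution representation $(\ref{eq:ram-ana-conv-n})$ together with the closure properties of semimultiplicative functions, and part (3) by invoking Theorem \ref{th:gen-mult}. Your added justification of the multiplicativity of $\overline{\mu}_r$ in $n$ from $\overline{\mu}_r \ast 1 = g_r$ is a correct and welcome supplement, but the argument is otherwise the paper's.
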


\begin{proof} 
(1) Equation $(\ref{eq:ram-ana-conv-r})$ says that  
$\overline{c}_r(n)$ in $r$ is the unitary convolution of Ramanujan's sum $c_r(n)$ in $r$ and the constant function $1(r)$. Since these functions are multiplicative in $r$ and the unitary convolution of 
two multiplicative functions is multiplicative \cite{M}, we see that 
$\overline{c}_r(n)$ is multiplicative in $r$. 

(2) We utilize Equation $(\ref{eq:ram-ana-conv-n})$. 
The functions $\eta_{r}(n)$ and $1(n)$ are multiplicative in $n$ 
and therefore they are  semimultiplicative in $n$. 
The function $\overline{\mu}_{r}(n)$ is  multiplicative in $n$ and thus 
$\overline{\mu}_{r}\big(r/n\big)$ is semimultiplicative in $n$. 
The usual product  and the Dirichlet convolution of semimultiplicative functions is semimultiplicative. 
This shows that $\overline{c}_r(n)$ is semimultiplicative in $n$. 

(3) This follows directly from Theorem \ref{th:gen-mult}. 
\end{proof}

\begin{rem}
We know that Ramanujan's sum $c_r(n)$ is multiplicative in $r$ and therefore completely determined by its values at prime powers given as 
\begin{equation}\label{eq:ram-values}
c_{p^k}(n)
=\begin{cases}
p^k-p^{k-1} & \text{if $p^k\mid n$}\\
-p^{k-1} & \text{if $p^{k-1}\mid n, p^k\nmid n$}\\
0 & \text{otherwise.} 
\end{cases} 
\end{equation}
We also know that Ramanujan's sum $c_r(n)$ is   
semimultiplicative in $n$ $(\in\N)$.  
The smallest value of $n$ $(\in\N)$ for which $c_r(n)\ne 0$ is $n=r/\gamma(r)$,
where $\gamma(r)$ is the product of the distinct 
prime factors of $r$. 
This means that the constant $a$ in Definition \ref{de:semimu}
is equal to $r/\gamma(r)$ for Ramanujan's sum $c_r(n)$. 
This follows from the property $(\ref{eq:ram-values})$. 
Ramanujan's sum $c_r(n)$ is quasimultiplicative in $n$ $(\in\N)$ if and only if $a=r/\gamma(r)=1$, which means that $r$ is squarefree. 
Ramanujan's sum $c_r(n)$ is multiplicative in $n$ $(\in\N)$ if and only if
$c_r(1)=\mu(r)=1$, which means that $r$ is squarefree and
$\omega(r)$ is even, where $\omega(r)$ is the number of distinct prime 
factors of $r$. 
As a consequence of the quasimultiplicativity of $c_r(n)$ we obtain the property 
\begin{equation}\label{eq:ram-quasi}
c_r(m)c_r(n)=\mu(r)c_r(mn)\ \ {\rm if}\ (m, n)=1,   
\end{equation}
which can also be found in \cite[p. 90]{M}. 
\end{rem}

\begin{rem}\label{re:an-ram-mult}
We know that $\overline{c}_r(n)$ is multiplicative in $r$.  
Its values at prime powers $r=p^k$ are given as 
$$
\overline{c}_{p^k}(n)=1+c_{p^k}(n)
=\begin{cases}
1+p^k-p^{k-1} & \text{if $p^k\mid n$}\\
   1-p^{k-1} & \text{if $p^{k-1}\mid n, p^k\nmid n$}\\
          1 & \text{otherwise.} 
\end{cases} 
$$
We also know that $\overline{c}_r(n)$ is semimultiplicative in $n$ $(\in\N)$. 
The smallest value of $n$ $(\in\N)$ for which $\overline{c}_r(n)\ne 0$ is $n=\prod_{p\| r} p$. 
This means that the constant $a$  in Definition \ref{de:semimu} 
is equal to $n=\prod_{p\| r} p$. 
In fact, by the multiplicativity of $\overline{c}_r(n)$ in $r$ we have 
$$
\overline{c}_r(n)=\prod_{p^k\| r} (1+c_{p^k}(n)). 
$$
If $k=1$ (i.e. $p\| r$), then $1+c_{p^k}(1)=1+\mu(p)=0$ and 
$1+c_{p^k}(p)=1+\phi(p)=p\ne 0$. 
If $k\ge 2$, then $1+c_{p^k}(1)=1+\mu(p^k)=1\ne 0$. 
This shows that 
$$
n=\prod_{p\| r} p \prod_{p^k\| r\atop k\ge 2} 1
$$ 
 is 
the smallest value of $n$ $(\in\N)$ for which $\overline{c}_r(n)\ne 0$. 
The function value of $\overline{c}_r(n)$ at $n=\prod_{p\| r} p$ is also $\prod_{p\| r} p$. 
This implies that $\overline{c}_r(n)$ is multiplicative in 
$n$ if and only if $\prod_{p\| r} p=1$, which holds if and only if  $r$ is squareful  or $r=1$. Note that $\overline{c}_r(n)$ is quasimultiplicative in 
$n$ if and only if it is multiplicative in $n$. 
This shows that 
\begin{equation}\label{eq:an-ram-quasi}
\overline{c}_r(m) \overline{c}_r(n)=\overline{\mu}(r) \overline{c}_r(mn)\ \ {\rm if}\ (m, n)=1,  
\end{equation}
where $\overline{\mu}(r)$ denotes the arithmetical function such that 
$\overline{\mu}(r)=1$ if $r$ is squareful  or $r=1$, 
and $\overline{\mu}(r)=0$ otherwise. 
\end{rem}

\begin{rem} It is known \cite{Si} that if $f(n, r)$ is multiplicative as a function of two variables, then for any $r\ge 1$, $f(m, r)f(n, r)= f(1, r) f(mn, r)$ whenever $(m, n)=1$. 
Taking $f(n, r)=c_r(n)$ gives $(\ref{eq:ram-quasi})$, and taking $f(n, r)=\overline{c}_r(n)$ gives its analogue $(\ref{eq:an-ram-quasi})$. 
\end{rem}


\medskip
\noindent
{\bf Acknowledgement}\ Financial support from The Magnus Ehrnrooth Foundation is gratefully 
acknowledged.

\newpage

\end{document}